
\documentclass[10pt,twocolumn,twoside,a4paper]{article}

\setlength{\columnsep}{4.4mm}
\usepackage[left=15mm, right=15mm, top=15mm, bottom=15mm]{geometry}

\usepackage{subfig}
\usepackage{flushend}
\usepackage{indentfirst}
\usepackage{graphics}
\usepackage{amsmath}
\usepackage{graphicx}
\usepackage{epstopdf}
\usepackage{float}

\usepackage[font=footnotesize,labelfont=bf]{caption}
\usepackage[labelsep=period]{caption}
\usepackage{fancyhdr}

\rmfamily

\textheight 24.5cm
\paperheight 28.5 true cm
\headheight 2cm

\graphicspath{{./figure/}}

\usepackage[latin1]{inputenc}
\usepackage{amsmath}
\usepackage{amsfonts}
\usepackage{amssymb}
\usepackage{graphicx}
\usepackage{amsthm}
\usepackage{tikz}
\usepackage{braids}
\usepackage{cite}
\usepackage{amssymb}

\newtheorem{theorem}{Theorem}[section]
\newtheorem{remark}[theorem]{Remark}

\newtheorem{example}[theorem]{Example}

\newcommand{\rnc}[2]{\renewcommand{#1}{#2}}
\rnc{\theequation}{\thesection.\arabic{equation}}
\rnc{\[}{\begin{equation}}

\pagestyle{fancy}

\fancyhf{} 
\fancyhead[CE]{Dimensional Representation of Artin Braid Group $B_n$, and a  General Burau Representation} 
\fancyhead[CO]{Mathematics and Statistics 10(1): 145-152, 2022}
\fancyhead[RE]{}
\fancyhead[LO]{}
\fancyhead[RO]{\thepage}
\fancyhead[LE]{\thepage}

\fancypagestyle{plain}{
\fancyhead{}
\fancyhead[L]{Mathematics and Statistics 10(1): 145-152, 2022\\
		      DOI: 10.13189/ms.2022.100112}
\fancyhead[R]{http://www.hrpub.org\\}
}
\setcounter{page}{145}

\begin{document}

\title{\huge \textbf{A basic $n$ Dimensional Representation of Artin Braid Group $B_n$, and a  General Burau Representation}}

\twocolumn[
\begin{@twocolumnfalse}
\author{\textbf{Arash Pourkia}\\\\
\footnotesize {College of Engineering and Technology, American University of the Middle East, Kuwait }\\
\footnotesize $^{*}$Corresponding Author: arash.pourkia@aum.edu.kw}

\date{}

\maketitle

\textit{\footnotesize \centerline{Received September 17, 2021; Revised November 10, 2021; Accepted November 21, 2021}}

\noindent \textbf{\textit{\footnotesize Cite This Paper in the following Citation Styles}}\\
\textit{\footnotesize \textbf{(a)}: [1] Arash Pourkia, "A basic $n$ Dimensional Representation of Artin Braid Group $B_n$, and a  General Burau Representation," Mathematics and Statistics, Vol.10, No.1, pp. 145-152, 2022. DOI: 10.13189/ms.2022.100112 }\\
\textit{\footnotesize \textbf{(b)}: Arash Pourkia, (2022). A basic $n$ Dimensional Representation of Artin Braid Group $B_n$, and a  General Burau Representation. Mathematics and Statistics, 10(1), 145-152. DOI: 10.13189/ms.2022.100112 }\\
\begin{flushleft}
\noindent \footnotesize {Copyright \copyright 2022 by authors, all rights reserved. Authors agree that this article remains permanently open access under the terms of\\
\noindent \footnotesize  the Creative Commons Attribution License 4.0 International License}
\end{flushleft}

\end{@twocolumnfalse}
]

\noindent \textbf{\large{Abstract}} \hspace{2pt} Braid groups and their representations are at the center of study, not only in low-dimensional topology, but also in many other branches of mathematics and theoretical physics. Burau representation of the Artin braid group which has two versions, reduced and unreduced, has been the focus of extensive study and research since its discovery in 1930's. It remains as one of the very important representations for the braid group. Partly, because of its connections to the Alexander polynomial which is one of the first and most useful invariants for  knots and links. In the present work, we show that interesting representations of braid group could be achieved using a simple and intuitive approach, where we simply analyse the path of strands in a braid and encode the over-crossings, under-crossings or no-crossings into some parameters. More precisely, at each crossing, where, for example, the strand $i$ crosses over the strand $i+1$ we assign $\mathbf{t}$ to the {\bf t}op strand and $\mathbf{b}$ to the {\bf b}ottom strand. We consider the parameter $\mathbf{t}$ as a {\it relative weight} given to strand $i$ relative to $i+1$,  hence the position $i\ i+1$ for $\mathbf{t}$ in the matrix representation. Similarly, the parameter $\mathbf{b}$ is a {\it relative weight} given to strand $i+1$ relative to $i$,  hence the position $i+1\ i$ for $\mathbf{b}$ in the matrix representation. We show this simple {\it path analyzing approach} that leads us to an interesting simple representation. Next, we show that following the same intuitive approach, only by introducing an additional parameter, we can greatly improve the representation into the one with much smaller kernel. This more general representation includes the unreduced Burau representation, as a special case. Our new {\it path analyzing approach} has the advantage that it applies a very simple and intuitive method capturing the fundamental interactions of the strands in a braid. In this approach we intuitively follow each strand in a braid and create a {\it history} for the strand as it interacts with other strands via over-crossings, under-crossings or no-crossings. This, directly, leads us to the desired representations. \\

\noindent \textbf{\large{Keywords}} \hspace{2pt} Artin Braid Group, Braid Group Representations, Burau Representations\\

\noindent\hrulefill

\section{\Large{Introduction}}

Braid groups, introduced by Artin in the 1920, and their representations are among central fields of study in low-dimensional topology and many other branches of mathematics and theoretical physics \cite{kassel-turaev book, Birman-book, Birman-Brendle005}. Burau representation of the Artin braid group was discovered in 1930's, by Werner Burau \cite{Burau}. It is defined by considering the braid group $B_n$ as the mapping class group of a disc with $n$ marked points, and studying the first homology group of the disc. The Burau representation, for $n$ larger than $3$, except for $n=4$, was proved to be nonfaithful through the works of John A. Moody, Darren D. Long and M. Paton, and  Stephen Bigelow in 1990's \cite{Moody91, Long-Paton93, Bigelow99}. The faithfulness of the Burau representation for $n=4$ is still an open problem. However, the Artin braid group was proved to be linear, through the works of Ruth J.  Lawrence, Stephen Bigelow, and Daan Krammer during the 1990's and early 2000's \cite{Lawrence90, Krammer00,Krammer02,Bigelow01, Turaev00}. The Burau representation, which has two versions, namely, reduced and unreduced Burau representation, remains as a very important representation for the braid group. Partly, because the reduced version has relations to the Alexander polynomial which in turn provides link/knot invariant  \cite{kassel-turaev book, Birman-book, Birman-Brendle005, kauffman knot book, colin knot book}.

\subsection*{\normalsize \textbf{The motivation and summary of results}}

The motivation behind the current work is to explore what could be achieved if we intuitively follow the path of each strand in a braid and create a sort of {\it history} for the strand as it interacts with other strands via over-crossings, under-crossings or no-crossings. We call this new method the {\it path analyzing approach}. This approach, as we will see, has the advantage that it applies a very simple and intuitive method capturing the fundamental interactions of the strands in a braid, i.e., over-crossings, under-crossings or no-crossings. This approach, as we will briefly explain now, will hep us to define new representations of the braid group $B_n$, some of which are a general versions of previously known representations. \\

At each crossing, where the strand $i$ crosses over the strand $i+1$, for example, we assign $\mathbf{t}$ to the {\bf t}op strand and $\mathbf{b}$ to the {\bf b}ottom strand. We consider the parameter $\mathbf{t}$ as a {\it relative weight} given to strand $i$ relative to $i+1$,  hence the position $i\ i+1$ for $\mathbf{t}$ in the matrix. Similarly, the parameter $\mathbf{b}$ is a {\it relative weight} given to strand $i+1$ relative to $i$,  hence the position $i+1\ i$ for $\mathbf{b}$ in the matrix (Figure \ref{fig2}). This simple intuitive approach directly leads us to the very simpler representation of Theorem \ref{maintheorem} defined by,

\begin{equation*}\Psi(\sigma_i)=\begin{pmatrix}
I_{i-1}
& \vline &0 \cdots & \vline & 0 \cdots\\
\hline
0 \cdots & \vline &
\begin{matrix}
0 & t \\
b & 0
\end{matrix}
& \vline & 0 \cdots \\
\hline
0 \cdots & \vline & 0 \cdots & \vline & I_{n-i-1}\\
\end{pmatrix}, \end{equation*}
which is also {\bf Case 3} in Theorem \ref{maintheorem2}, after renaming the parameters.\\

Next, to improve this representation to the one with much smaller kernel we go one step further, as follows. Not only we assign $\mathbf{t}$ and $\mathbf{b}$ as described above, but also we assign to the top strand a new {\it weight}, $\mathbf{\omega}$, relative to itself (hence the  $i\ i$ position). After some inspection we realise the only way for the representation to work is if $\mathbf{\omega = 1-tb}$. This leads to the more general representation defined by,

\begin{equation*}\Psi(\sigma_i)=\begin{pmatrix}
I_{i-1}
& \vline &0 \cdots & \vline & 0 \cdots\\
\hline
0 \cdots & \vline &
\begin{matrix}
1-tb & t \\
b & 0
\end{matrix}
& \vline & 0 \cdots \\
\hline
0 \cdots & \vline & 0 \cdots & \vline & I_{n-i-1}\\
\end{pmatrix}, \end{equation*}
which, after renaming of the parameters, is {\bf Case 1} in Theorem \ref{maintheorem2}. Similarly, if we assign to the bottom strand the {\it weight} $\mathbf{1-tb}$, we will have

\begin{equation*}\Psi(\sigma_i)=\begin{pmatrix}
I_{i-1}
& \vline &0 \cdots & \vline & 0 \cdots\\
\hline
0 \cdots & \vline &
\begin{matrix}
0 & t \\
b & 1-tb
\end{matrix}
& \vline & 0 \cdots \\
\hline
0 \cdots & \vline & 0 \cdots & \vline & I_{n-i-1}\\
\end{pmatrix} ,\end{equation*}
which is {\bf Case 2} in Theorem \ref{maintheorem2}, after renaming of the parameters. In Theorem \ref{maintheorem2}, we also show that it is not possible to assign the new weight, $\mathbf{\omega}$, to both strands at the same time, as one might wonder to do so. \\

 The present paper is organized as follows. In Section \ref{core idea} we describe the core idea of our {\it path analyzing approach} and use it to define the basic representation. Then we show that this representation is not faithful. In Section \ref{General Burau representation}, continuing our {\it path analyzing approach} we introduce a more general representation for $B_n$, which has much more smaller kernel than the basic representation. It also includes, as a special case, the unreduced Burau representation. We finish this paper with  the concluding remarks in Section \ref{Concluding remarks}, followed by the acknowledgement.

\section{\Large{The core idea: Path Analyzing Approach}} \label{core idea}

\subsection{\normalsize \textbf{Preliminaries and Notations}}
We denote by $B_n$ the $n$-strands braid group with Artin generators $\sigma_i,\, 1\leq i \leq n-1$, satisfying the so-called cubic and commuting relations,
\begin{equation} \label{cubic relations} \sigma_{i}\sigma_{i+1}\sigma_{i}=\sigma_{i+1}\sigma_{i}\sigma_{i+1} \,\, \text{for} \,\, 1\leq i\leq n-2 \, \, ,\end{equation}
\begin{equation}\label{commuting relations} \sigma_{i}\sigma_{j}=\sigma_{j}\sigma_{i} \,\, \text{for} \,\, |i-j|\geq 2 \end{equation}

For a given braid $\sigma$ in $B_n$, which is composed of powers of some $\sigma_i$'s, we use the common pictorial representation. We use two sets of points, both depicted by numbers $1, 2, \cdots n$ (from left to right), and we call them higher points and lower points. When there is no confusion we might simply say a higher $i$ and a lower $j$, or might even refer to them only as $i$ and $j$ (Figure \ref{fig1}). For our purpose, we fix the downward direction for all braids. This means we always imagine that any strand in a given braid, "like the path of a moving particle", starts at a higher point indicated by a higher number $i$, $1\leq i \leq n$ and travels {"all the way down"}, to arrive at a lower point indicated by a lower number $j$, $1\leq j \leq n$. In doing so, the strand might passes through some crossings of type $\sigma_i$'s and  $\sigma^{-1}_i$'s. We will call the crossing in a $\sigma_i$ a positive crossing or an over-crossing, and call the crossing in its inverse $\sigma_i^{-1}$ a negative crossing or an under-crossing (Figure \ref{fig1}).

\begin{figure}[thbp]
\centering
\includegraphics[width=7cm,height=4cm]{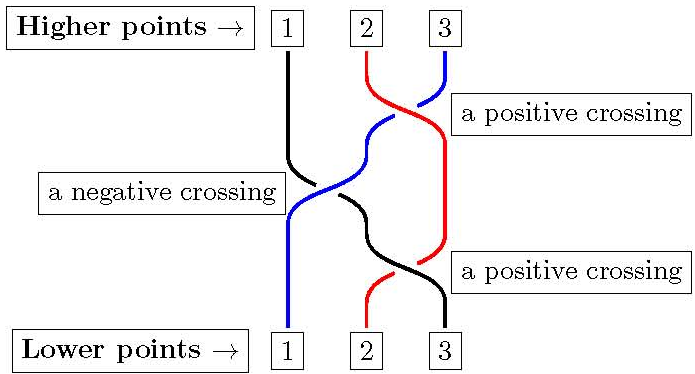}
\caption{Examples of positive and negative crossings.}
\label{fig1}
\end{figure}

We will call a strand, starting from a higher $i$ the $i^{th}$-strand. The $i^{th}$-strand moving downward will end up at a lower $j$. To such a strand, we will assign a value $\psi_{ij}$ {\it which is equal to the product of all values of crossings it has passed throughout its downward travel}. This way, to any given braid $\sigma$ in $B_n$, we will assign a $n$ by $n$ matrix! All these will become more clear in what follows.
	
\subsection{\normalsize \textbf{The core idea}}
The core idea of this paper is as follows. Let $B_2$ be the $2$-strands braid group with the generator $\sigma$. We will call $\sigma$ the positive crossing (also called the over-crossing) and call its inverse, $\sigma^{-1}$, the negative crossing (also called the under-crossing). Let $\mathbb{Z}[t^{\pm 1}, b^{\pm 1}]$ be the ring of Laurent polynomials in two variables, over $\mathbb{Z}$. We will assign two $2$ by $2$ matrices $\psi$ and $\psi^{-1}$ in $GL_2(\mathbb{Z}[t^{\pm1}, b^{\pm 1}])$ ($2$ dimensional representations), to $\sigma$ and to $\sigma^{-1}$ respectively, as follows (Figure \ref{fig2}).

	In the positive crossing $\sigma$, the $1^{st}$ strand, which is also the {\bf top} strand, goes downward from the higher point $1$ to the lower point $2$. We assign the entry $\psi_{12}=t$ to this strand. At the same time the $2^{nd}$ strand, which is also the {\bf bottom} strand, goes downward from the higher $2$ to the lower $1$. We assign the entry $\psi_{21}=b$ to this strand. In the negative crossing $\sigma^{-1}$, the top strand goes downward from the higher $2$ to the lower $1$. We assign the entry $\psi^{-1}_{21}=t^{-1}$ to this strand. At the same time the bottom strand goes downward from the higher $1$ to the lower $2$. We assign the entry $\psi^{-1}_{12}=b^{-1}$ to this strand. Moreover, since there is no strand connecting the higher $1$ to the lower $1$, we set $\psi_{11}=0$. For a similar reason $\psi_{22}=0$, and also $\psi^{-1}_{11}=\psi^{-1}_{22}=0$.
	
	In summary, for positive crossings, $\mathbf{t}$ is assigned to the  {\bf top} strand and $\mathbf{b}$ is assigned to the {\bf bottom} strand, at each crossing point. Similarly, for negative crossings, $\mathbf{t^{-1}}$ is assigned to the {\bf top} strand and $\mathbf{b^{-1}}$ is assigned to the {\bf bottom} strand, at each crossing point (Figure \ref{fig2}).  
	
\begin{figure}[thbp]
\centering
\includegraphics[width=7cm,height=4cm]{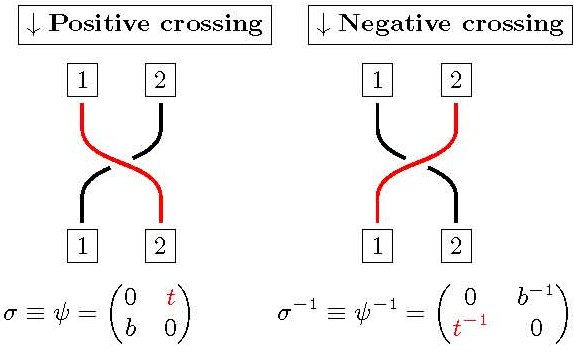}
\caption{Assigning matrices to basic positive and negative crossings.}
\label{fig2}
\end{figure}
	
To extend this idea to $B_n$ with generators $\sigma_1$, $\sigma_2$ ,etc., it is enough to to assign the value $1$ to any strand who goes downward from a higher point $i$ to a lower point $i$ straight away without being involved with any crossings. This means the $ii$-entry=$1$, for such a $i$.  Also remember that if there is no strand connecting a higher $i$ to a lower $j$ then the $ij$-entry=$0$. For $B_3$ this is shown in Figures \ref{fig3} and \ref{fig4}.

\begin{figure}[thbp]
\centering
\includegraphics[width=7cm,height=4cm]{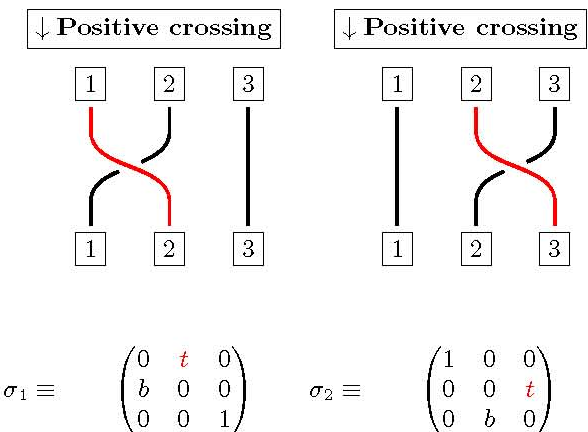}
\caption{Assigning matrices to basic positive crossings in $B_3$.}
\label{fig3}
\end{figure}

\begin{figure}[thbp]
\centering
\includegraphics[width=7cm,height=4cm]{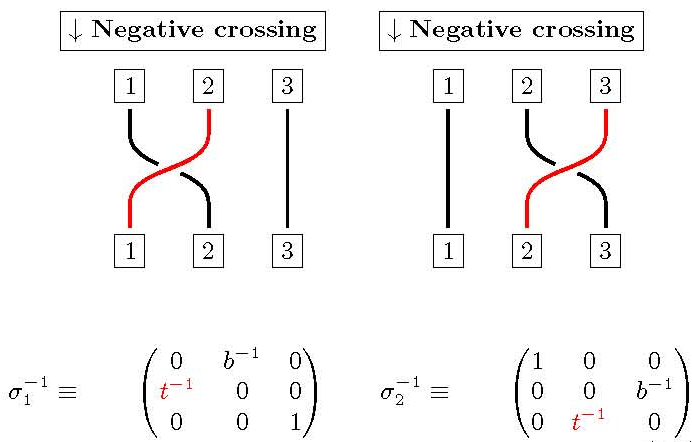}
\caption{Assigning matrices to basic negative crossings in $B_3$.}
\label{fig4}
\end{figure}

In examples below, we show how a typical braid in $B_3$ composed of some generator is depicted, and also how its matrix representation is calculated by multiplication of matrices representing its constituent generators:
	
	\begin{example}\label{sig2sig2in=identity} Illustrating $\sigma_2 \sigma^{-1}_2=I_3$, Figures \ref{fig5}.
	
	\begin{figure}[thbp]
\centering
\includegraphics[width=7cm,height=4cm]{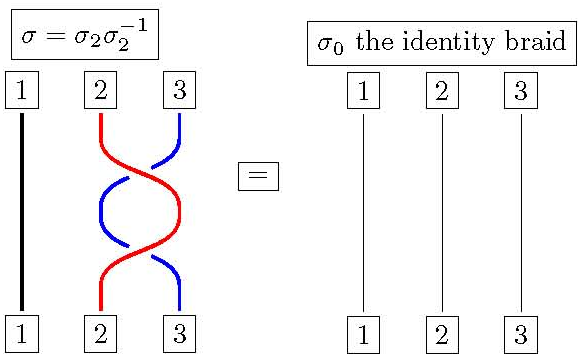}
\caption{Illustrating $\sigma_2 \sigma^{-1}_2=I_3$.}
\label{fig5}
\end{figure}

\begin{align*} \sigma &=\sigma_2 \sigma^{-1}_2  \equiv
				\begin{pmatrix} 1&0&0\\0&0&t\\0&b&0 \end{pmatrix}
				\begin{pmatrix} 1&0&0\\0&0&b^{-1}\\0&t^{-1}&0 \end{pmatrix}
				\\&=\begin{pmatrix} 1&0&0\\0&tt^{-1}&0\\0&0&bb^{-1} \end{pmatrix}=
				\begin{pmatrix} 1&0&0\\0&1&0\\0&0&1 \end{pmatrix}=I_3 \end{align*}
	\end{example}

	\begin{example}\label{sig2sig1invsig2} Illustrating $\sigma=\sigma_2 \sigma^{-1}_1 \sigma_2$, Figures \ref{fig6}.

\begin{align*}
\sigma &=\sigma_2 \sigma^{-1}_1 \sigma_2 \\&\equiv
			\begin{pmatrix} 1&0&0\\0&0&t\\0&b&0 \end{pmatrix}
			\begin{pmatrix} 0&b^{-1}&0\\t^{-1}&0&0\\0&0&1 \end{pmatrix}
			\begin{pmatrix} 1&0&0\\0&0&t\\0&b&0 \end{pmatrix}\\\nonumber\\
			&=\begin{pmatrix} 0&0&b^{-1}t\\0&tb&0\\bt^{-1}&0&0 \end{pmatrix}\nonumber
\end{align*}			
	\end{example}
	
\begin{figure}[thbp]
\centering
\includegraphics[width=4cm,height=7cm]{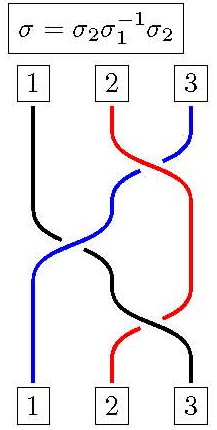}
\caption{Illustrating $\sigma=\sigma_2 \sigma^{-1}_1 \sigma_2$.}
\label{fig6}
\end{figure}

\begin{remark}\label{pathanalyzing1}{\bf[Path Analyzing Approach:]}\\
{\rm
{\bf (a)} It is very important to remark that, at each crossing where, for example, the strand $i$ crosses over the strand $i+1$, we in fact consider the parameter $\mathbf{t}$ as a {\it relative weight} given to strand $i$ relative to $i+1$,  hence the position $i\ i+1$ for $\mathbf{t}$ in the matrix. Similarly, the parameter $\mathbf{b}$ is a {\it relative weight} given to strand $i+1$ relative to $i$,  hence the position $i+1\ i$ for $\mathbf{b}$ in the matrix.
	
{\bf (b)} It useful to notice that, the matrix representation of a given braid $\sigma$ could be found in two ways. One way is by, formally, multiplying all matrices representing generators who make up the braid $\sigma$. Second way is, more intuitively, by direct construction of matrix entries, following the core idea of, $\mathbf{t}$ for {\bf top} and $\mathbf{b}$ for {\bf bottom} in positive crossings, and, $\mathbf{t^{-1}}$ for {\bf top} and $\mathbf{b^{-1}}$ for {\bf bottom} in negative crossings. This is done by analyzing the path of each strand from its starting higher point to its ending lower point, in the braid. Let us show this for the braid of Example \ref{sig2sig1invsig2}. The $1^{st}$ strand travels  downward from the higher $1$ all the way to the lower $3$. Doing so, this strand passes a negative crossing by being at the bottom, hence collecting a $b^{-1}$. Then it passes a positive crossing by being at the top, hence collecting a $t$. Multiplying these, gives the $13$-entry of the matrix representation as $b^{-1}t$. Similarly the $2^{nd}$ strand travels downward from the higher $2$ all the way to the lower $2$. Doing so, this strand passes a positive crossing by being at the top, hence a $t$, followed by another positive crossing by being at the bottom, hence a $b$. Thus the $22$-entry$=tb$. Similar "{\it{path analyzing}}" gives the $31$-entry$=bt^{-1}$.
}
\end{remark}
	
\subsection{\normalsize \textbf{The basic representation}}
Summarizing and putting the above idea in more rigorous terms, we have the following. At each crossing, where the strand $i$ crosses over the strand $i+1$ for example, we assign $\mathbf{t}$ to the {\bf t}op strand and $\mathbf{b}$ to the {\bf b}ottom strand. We consider the parameter $\mathbf{t}$ as a {\it relative weight} given to strand $i$ relative to $i+1$,  hence the position $i\ i+1$ for $\mathbf{t}$ in the matrix. Similarly, the parameter $\mathbf{b}$ is a {\it relative weight} given to strand $i+1$ relative to $i$,  hence the position $i+1\ i$ for $\mathbf{b}$ in the matrix (Figure \ref{fig2}). The same goes for the under-crossing situation. This leads us to the following theorem.
	
	\begin{theorem}\label{maintheorem}
		Let $B_n$ be the braid group with Artin generators $\sigma_i,\, 1\leq i \leq n-1$. Let $\mathbb{Z}[t^{\pm1}, b^{\pm 1}]$ be the ring of Laurent polynomials in two variables, over $\mathbb{Z}$. The following defines a representation of $B_n$ into $GL_n(\mathbb{Z}[t^{\pm1}, b^{\pm 1}])$.
		\begin{equation*} \Psi: B_n \rightarrow GL_n(\mathbb{Z}[t^{\pm1}, b^{\pm 1}])\end{equation*}
		\begin{align*} \Psi(\sigma_i)&:=I_{i-1} \oplus
		\begin{pmatrix} 0&t\\b&0 \end{pmatrix} \oplus I_{n-i-1}
		\\&=\begin{pmatrix}
		I_{i-1}
		& \vline &0 \cdots & \vline & 0 \cdots\\
		\hline
		0 \cdots & \vline &
		\begin{matrix}
		0 & t \\
		b & 0
		\end{matrix}
		& \vline & 0 \cdots \\
		\hline
		0 \cdots & \vline & 0 \cdots & \vline & I_{n-i-1}\\
		\end{pmatrix}  \end{align*}
		
		Here, $I_k$ is the identity matrix of size $k$. The above definition also implies that,
		\begin{equation*} \Psi(\sigma_i^{-1})= \begin{pmatrix}
		I_{i-1}
		& \vline &0 \cdots & \vline & 0 \cdots\\
		\hline
		0 \cdots & \vline &
		\begin{matrix}
		0 & b^{-1} \\
		t^{-1} & 0
		\end{matrix}
		& \vline & 0 \cdots \\
		\hline
		0 \cdots & \vline & 0 \cdots & \vline & I_{n-i-1}\\
		\end{pmatrix}  \end{equation*}
		
	\end{theorem}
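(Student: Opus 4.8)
The plan is to invoke the standard presentation of $B_n$. Since $B_n$ is generated by $\sigma_1,\dots,\sigma_{n-1}$ subject only to the cubic and commuting relations \eqref{cubic and commuting relations}, it suffices to check: (i) each $\Psi(\sigma_i)$ is invertible over $\mathbb{Z}[t^{\pm1},b^{\pm1}]$; (ii) $\Psi(\sigma_i)\Psi(\sigma_j)=\Psi(\sigma_j)\Psi(\sigma_i)$ whenever $|i-j|\geq 2$; and (iii) $\Psi(\sigma_i)\Psi(\sigma_{i+1})\Psi(\sigma_i)=\Psi(\sigma_{i+1})\Psi(\sigma_i)\Psi(\sigma_{i+1})$ for $1\leq i\leq n-2$. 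Once these hold, $\Psi$ extends uniquely to a group homomorphism $B_n\to GL_n(\mathbb{Z}[t^{\pm1},b^{\pm1}])$, and the stated formula for $\Psi(\sigma_i^{-1})$ drops out of (i) by computing the inverse of the single non-identity block.

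For (i), observe that $\Psi(\sigma_i)=I_{i-1}\oplus M\oplus I_{n-i-1}$ with $M=\begin{pmatrix}0&t\\ b&0\end{pmatrix}$, and $\det M=-tb$ is a unit, so $M^{-1}=\begin{pmatrix}0&b^{-1}\\ t^{-1}&0\end{pmatrix}$; hence $\Psi(\sigma_i)\in GL_n$ with $\Psi(\sigma_i)^{-1}=I_{i-1}\oplus M^{-1}\oplus I_{n-i-1}$, which is precisely the displayed matrix for $\Psi(\sigma_i^{-1})$. For (ii), when $|i-j|\geq 2$ the non-identity $2\times2$ blocks of $\Psi(\sigma_i)$ and $\Psi(\sigma_j)$ occupy the disjoint index sets $\{i,i+1\}$ and $\{j,j+1\}$, so both $\Psi(\sigma_i)\Psi(\sigma_j)$ and $\Psi(\sigma_j)\Psi(\sigma_i)$ equal the block-diagonal matrix carrying $M$ in each of those positions and $1$ elsewhere; this step is pure bookkeeping with block multiplication.

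The only genuine computation is (iii). Since $\Psi(\sigma_i)$ and $\Psi(\sigma_{i+1})$ restrict to the identity on all coordinates outside $\{i,i+1,i+2\}$, the relation (iii) is equivalent to the corresponding identity for the two ``local'' $3\times3$ matrices $A=\begin{pmatrix}0&t&0\\ b&0&0\\ 0&0&1\end{pmatrix}$ and $B=\begin{pmatrix}1&0&0\\ 0&0&t\\ 0&b&0\end{pmatrix}$, namely $ABA=BAB$. A direct multiplication gives $ABA=BAB=\begin{pmatrix}0&0&t^2\\ 0&tb&0\\ b^2&0&0\end{pmatrix}$, which settles the claim; alternatively, one can read this off from the path-analyzing description of Remark~\ref{pathanalyzing1}, since in both words the strand from higher $i$ to lower $i+2$ collects $t\cdot t$, the middle strand collects $t\cdot b$, and the strand from higher $i+2$ to lower $i$ collects $b\cdot b$. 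I do not anticipate a real obstacle: the far commutation is automatic and the braid relation is a single $3\times3$ verification; the one point requiring care is the placement and orientation of the $2\times2$ block, so that the induced expression for $\Psi(\sigma_i^{-1})$ matches the stated one exactly and the entries are consistent with the $t$-for-top, $b$-for-bottom convention.
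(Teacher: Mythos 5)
Your proposal is correct and follows essentially the same route as the paper: reduce to verifying the far-commutation and braid relations on generators, localize to the relevant $2\times2$ or $3\times3$ blocks, and confirm $ABA=BAB=\bigl(\begin{smallmatrix}0&0&t^2\\0&tb&0\\b^2&0&0\end{smallmatrix}\bigr)$, which matches the matrix the paper obtains via its path-analyzing method (an alternative you also note). If anything, your write-up is slightly more complete, since you explicitly justify the invertibility, the inverse formula, and the far-commutation step that the paper only asserts.
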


	\begin{proof}
		
	 Being a representation follows easily from the definition of $\Psi$ and from direct and simple calculations on block matrices. However, here we use the path analyzing method. We only prove the cubic identity $\Psi(\sigma_{i}) \, \Psi(\sigma_{i+1}) \, \Psi(\sigma_{i})=\Psi(\sigma_{i+1}) \, \Psi(\sigma_{i}) \, \Psi(\sigma_{i+1})$. The remaining identities could be proved in a similar manner. We use the following diagrams in Figures \ref{fig7} and \ref{fig8}. Let us refer to the resulting matrix representation as $\psi$. As it is clear from both diagrams in Figures \ref{fig7} and \ref{fig8}, the only involved strands are ${i}^{th}$, ${(i+1)}^{th}$ and ${(i+2)}^{th}$ strands. \\
		
		In the diagram of $\sigma_{i}\, \sigma_{i+1} \,\sigma_{i}$ in Figure \ref{fig7},
		
\begin{figure}[thbp]
\centering
\includegraphics[width=7cm,height=5cm]{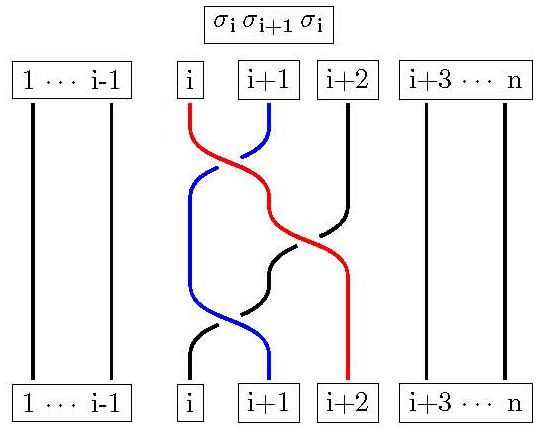}
\caption{Illustrating $\sigma_{i}\, \sigma_{i+1} \,\sigma_{i}$.}
\label{fig7}
\end{figure}	
		
we see that the ${i}^{th}$-strand moving downward from the higher $i$ to the lower $i+2$, passing through two positive crossings, both at the top (hence a $t$ followed by another $t$). Thus the $i\,i+2$-entry is equal to $t^2$, i.e., $\psi_{i \,i+2}=t^2$.
		
		The ${(i+1)}^{th}$-strand moving downward from the higher $i+1$ to the lower $i+1$, passing through a positive crossing at the bottom (hence a $b$), followed by another positive crossings at the top (hence a $t$). Thus the $i+1\,i+1$-entry is equal to $bt$, i.e., $\psi_{i+1\, i+1}=bt$.
		
		Finally, the ${(i+2)}^{th}$-strand moving downward from the higher $i+2$ to the lower $i$, passing through a positive crossing at the bottom (hence a $b$), followed by another positive crossings at the bottom (hence another $b$). Thus the $i+2\,i$-entry is equal to $b^2$, i.e., $\psi_{i+2 \, i}=b^2$.
		
		Moreover, since all other $k^{th}$-strands for $k \neq i,\,i+1,\, i+2$, go directly from the higher $k$ to the lower $k$ without any crossings, the diagonal entries $\psi_{k \, k}=1$ for $k \neq i,\,i+1,\, i+2$. All the other entries, not involved in any journey of any strand, are zero.    \newline
		
		Next, in the diagram of $\sigma_{i+1}\, \sigma_{i}, \,\sigma_{i+1}$ in Figure \ref{fig8},
		
\begin{figure}[thbp]
\centering
\includegraphics[width=7cm,height=5cm]{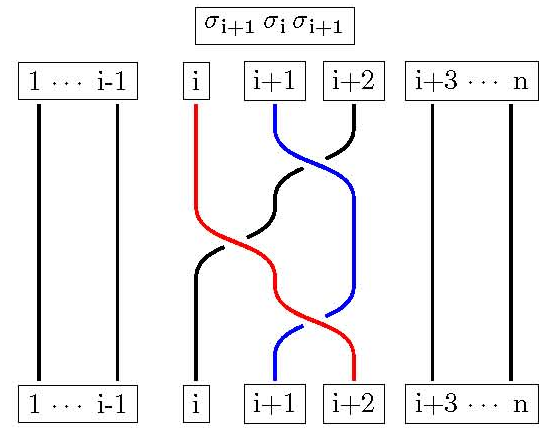}
\caption{Illustrating $\sigma_{i+1}\, \sigma_{i}, \,\sigma_{i+1}$.}
\label{fig8}
\end{figure}	

we see exact similar patterns, as in $\sigma_{i}\, \sigma_{i+1} \,\sigma_{i}$, for the ${i}^{th}$-strand and ${(i+2)}^{th}$-strand. For the ${(i+1)}^{th}$-strand the only difference is the {\bf order} of being at the top of a positive crossing followed by being at the bottom of another positive crossing (i.e. $tb$ instead of $bt$). Thus we get the similar results, $\psi_{i \,i+2}=t^2$, $\psi_{i+1\, i+1}=bt$, and $\psi_{i+2 \, i}=b^2$. Obviously the same is true for all other entries.
		Therefore we have the desired result,
		
			\begin{align*} 	\Psi(\sigma_{i})& \, \Psi(\sigma_{i+1}) \, \Psi(\sigma_{i})=\Psi(\sigma_{i+1}) \, \Psi(\sigma_{i}) \, \Psi(\sigma_{i+1})
			\\&= \begin{pmatrix}
			I_{i-1}
			& \vline &0 \cdots & \vline & 0 \cdots\\
			\hline
			0 \cdots & \vline &
			\begin{matrix}
			0 & 0&t^2 \\
			0&tb&0\\
			b^2&0 & 0
			\end{matrix}
			& \vline & 0 \cdots \\
			\hline
			0 \cdots & \vline & 0 \cdots & \vline & I_{n-i-2}\\
			\end{pmatrix} \end{align*} . 
		
		This finishes the proof of the theorem.
	\end{proof}

\subsection{\normalsize \textbf{This representation is not faithfull}} \label{rep.not.faithful}
	
	Now by providing some examples, we will show that the kernel of the representation defined in Theorem \ref{maintheorem} is not trivial. Hence the representation is not faithful. In what follows, by abusing the notation, we simply use  $\sigma$ not only to denote a braid but also to denote its matrix representation $\Psi(\sigma)$ as well.
	
	\begin{example}\label{killer3}
		In $B_3$, for braids $\alpha$ and $\beta$ as shown in Figure \ref{fig9},
		
\begin{figure}[thbp]
\centering
\includegraphics[width=7cm,height=5cm]{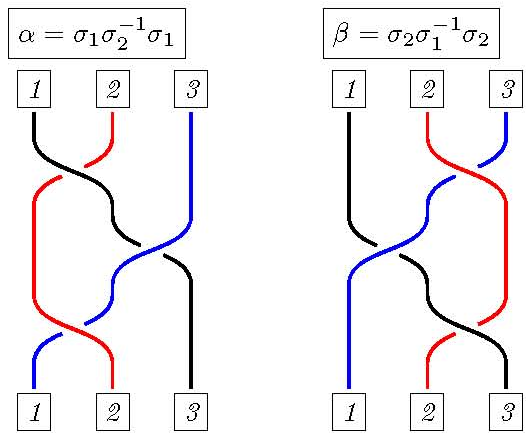}
\caption{Illustrating $\alpha$ and $\beta$.}
\label{fig9}
\end{figure}	

we will get equal matrix representations (from Theorem \ref{maintheorem}), i.e.,
		
		\begin{equation*} \alpha=\beta=\begin{pmatrix} 0&0&b^{-1}t\\0&tb&0\\bt^{-1}&0&0 \end{pmatrix} \end{equation*}
		
		Thus the representation of $\alpha\beta^{-1}$ is equal to identity, i.e.,
		
		\begin{equation*} \alpha\beta^{-1}= I_3=\begin{pmatrix} 1&0&0\\0&1&0\\0&0&1 \end{pmatrix}\end{equation*}
		
		But $\alpha\beta^{-1}$ as a braid  in $B_3$, is not equal to the trivial braid $\sigma_0$, as shown in Figure \ref{fig10}.
		
\begin{figure}[thbp]
\centering
\includegraphics[width=7cm,height=6cm]{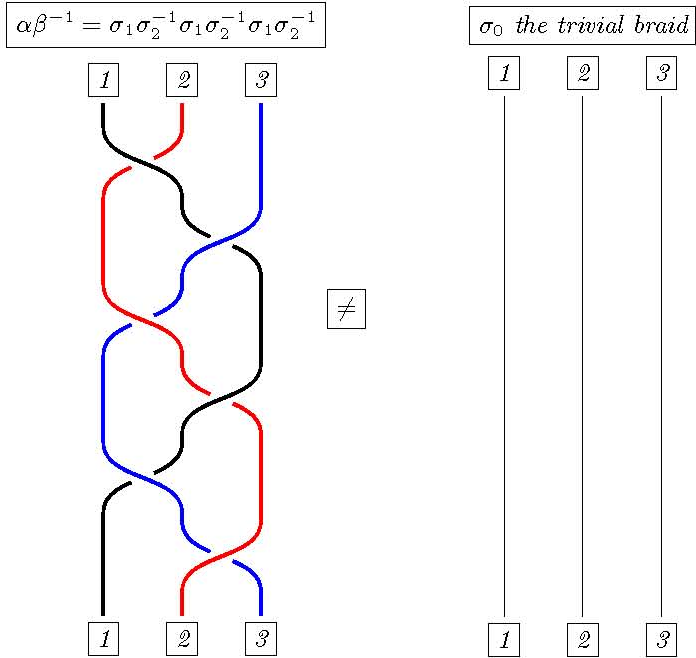}
\caption{Illustrating $\alpha\beta^{-1}\neq I_3$.}
\label{fig10}
\end{figure}		

Therefore we have a non trivial braid, namely, $$\alpha \beta^{-1}=\sigma_1 \sigma^{-1}_2 \sigma_1 \sigma^{-1}_2 \sigma_1 \sigma^{-1}_2$$ in the kernel of this representation.
	\end{example}
	
	For more examples one can verify that, for example in $B_3$, if we let, $\alpha=\sigma_1^{-1} \sigma_2^{2} \sigma_1^{-1} \sigma_2^{-1} \sigma_1^{2} \sigma_2^{-1} $ and
	$\beta=\sigma_2^{-1} \sigma_1^{2} \sigma_2^{-1} \sigma_1^{-1} \sigma_2^{2} \sigma_1^{-1} $, then $\alpha \beta^{-1}=I_3$ from the representation. But it is not hard to see that as a braid in $B_3$, $\alpha \beta^{-1} \neq \sigma_0$.
	
	Also in $B_4$, One can check that if we let, $\sigma=\sigma_1^{2}\sigma_3^{2}\sigma_2\sigma_3^{-2}\sigma_1^{-2}\sigma_2^{-1}$ then the representation of $\sigma$ is the identity matrix $I_4$, but as a braid in $B_4$, $\sigma$ is not equal to the trivial braid $\sigma_0$.
	
In Section \ref{General Burau representation} we generalize and improve the representation defined in this section to the one with a much smaller kernel.

\section{\Large{A General Burau representation}} \label{General Burau representation}
In this section, by continuing the same intuitive path analyzing approach, only by introducing an additional parameter, we greatly improve the representation in Section \ref{core idea} into the one with much smaller kernel. This more general representation includes the unreduced Burau representation \cite{Burau}, as a special case.\\

For that, not only we assign $\mathbf{t}$ and $\mathbf{b}$ as described before, but also we assign to the top strand a new {\it weight}, $\mathbf{\omega}$, relative to itself (hence the  $i\ i$ position). After some inspection we realise the only way for the representation to work is if $\mathbf{\omega = 1-tb}$. This leads to the more general representation defined by,

\begin{equation*} \Psi(\sigma_i)=\begin{pmatrix}
I_{i-1}
& \vline &0 \cdots & \vline & 0 \cdots\\
\hline
0 \cdots & \vline &
\begin{matrix}
1-tb & t \\
b & 0
\end{matrix}
& \vline & 0 \cdots \\
\hline
0 \cdots & \vline & 0 \cdots & \vline & I_{n-i-1}\\
\end{pmatrix} , \end{equation*}
which, after renaming of the parameters, is {\bf Case 1} in the following theorem. Similarly, if we assign to the bottom strand the {\it weight} $\mathbf{1-tb}$, we will arrive at the {\bf Case 2} in the following theorem, after renaming of the parameters. Moreover, as we will see, we can not assign the new weight to both strands at the same time.\\

In the following theorem we have renamed the parameters to better exhibit the generality of the result. In other words, to present an answer to the following question. {\it What is the most general possible $2$-by-$2$ matrix that can be assigned to the basic crossing of two strands in $B_2$, which can then be extended to a representation of $B_n$ in a natural way}?

\begin{theorem}\label{maintheorem2}
Let $B_n$ be the braid group with Artin generators $\sigma_i,\, 1\leq i \leq n-1$. Let $\mathbb{Z}[a^{\pm1}, b^{\pm 1}, c^{\pm 1}, d^{\pm 1}]$ be the ring of Laurent polynomials in four variables, over $\mathbb{Z}$. The map,

\begin{align*} \Psi&: B_n \rightarrow GL_n(\mathbb{Z}[a^{\pm1}, b^{\pm 1}, c^{\pm 1}, d^{\pm 1}])
\\&\Psi(\sigma_i):=I_{i-1} \oplus
\begin{pmatrix} a&b\\c&d \end{pmatrix} \oplus I_{n-i-1}
\\& =\begin{pmatrix}
I_{i-1}
& \vline &0 \cdots & \vline & 0 \cdots\\
\hline
0 \cdots & \vline &
\begin{matrix}
a & b \\
c & d
\end{matrix}
& \vline & 0 \cdots \\
\hline
0 \cdots & \vline & 0 \cdots & \vline & I_{n-i-1}\\
\end{pmatrix} \end{align*}
will define a non-trivial representation of $B_n$ into $GL_n(\mathbb{Z}[a^{\pm1}, b^{\pm 1}, c^{\pm 1}, d^{\pm 1}])$, if and only if one of the following cases occurs:\newline

{\bf Case 1:} $d=0$, $c=\frac{1-a}{b}$ and $a\neq 1$, which results in the representation,

\begin{align*} \Psi&: B_n \rightarrow GL_n(\mathbb{Z}[a^{\pm1}, b^{\pm 1}])
\\&\Psi(\sigma_i):=I_{i-1} \oplus
\begin{pmatrix} a&b\\\frac{1-a}{b}&0 \end{pmatrix} \oplus I_{n-i-1}
\\&=\begin{pmatrix}
I_{i-1}
& \vline &0 \cdots & \vline & 0 \cdots\\
\hline
0 \cdots & \vline &
\begin{matrix}
a & b \\
\frac{1-a}{b} & 0
\end{matrix}
& \vline & 0 \cdots \\
\hline
0 \cdots & \vline & 0 \cdots & \vline & I_{n-i-1}\\
\end{pmatrix} \end{align*}

{\bf Case 2:} $a=0$, $c=\frac{1-d}{b}$ and $d \neq 1$, which results in the representation,

\begin{align*} \Psi&: B_n \rightarrow GL_n(\mathbb{Z}[d^{\pm1}, b^{\pm 1}])
\\&\Psi(\sigma_i):=I_{i-1} \oplus
\begin{pmatrix} 0&b\\\frac{1-d}{b}&d \end{pmatrix} \oplus I_{n-i-1}
\\&=\begin{pmatrix}
I_{i-1}
& \vline &0 \cdots & \vline & 0 \cdots\\
\hline
0 \cdots & \vline &
\begin{matrix}
0 & b \\
\frac{1-d}{b} & d
\end{matrix}
& \vline & 0 \cdots \\
\hline
0 \cdots & \vline & 0 \cdots & \vline & I_{n-i-1}\\
\end{pmatrix} \end{align*}

{\bf Case 3:} $a=d=0$ and $b\neq0$, $c\neq0$, which results in the representation,

\begin{align*} \Psi&: B_n \rightarrow GL_n(\mathbb{Z}[c^{\pm1}, b^{\pm 1}])
\\&\Psi(\sigma_i):=I_{i-1} \oplus
\begin{pmatrix} 0&b\\c&0 \end{pmatrix} \oplus I_{n-i-1}
\\&=\begin{pmatrix}
I_{i-1}
& \vline &0 \cdots & \vline & 0 \cdots\\
\hline
0 \cdots & \vline &
\begin{matrix}
0 & b \\
c & 0
\end{matrix}
& \vline & 0 \cdots \\
\hline
0 \cdots & \vline & 0 \cdots & \vline & I_{n-i-1}\\
\end{pmatrix} \end{align*}

In all the above formulas, $I_k$ is the identity matrix of size $k$ and $1\leq i \leq n-1$.
\end{theorem}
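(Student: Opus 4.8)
The plan is to reduce the statement to a single polynomial identity coming from one instance of the braid relation, solve the resulting system in $a,b,c,d$ under the twin constraints that the generators' images be invertible and that $\Psi$ be non-trivial, and then confirm directly that each surviving family of parameters does yield a representation.

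First I would observe that $\Psi$ extends to a homomorphism $B_n\to GL_n$ exactly when (i) each $\Psi(\sigma_i)$ is invertible, equivalently the $2\times2$ block $\left(\begin{smallmatrix} a & b\\ c & d\end{smallmatrix}\right)$ is invertible, i.e. $ad-bc$ is a unit of the coefficient ring, and (ii) the Artin relations hold. The commuting relations are automatic, since for $|i-j|\geq 2$ the non-identity blocks of $\Psi(\sigma_i)$ and $\Psi(\sigma_j)$ occupy disjoint pairs of coordinates. For the braid relation, note that $\Psi(\sigma_i)$ and $\Psi(\sigma_{i+1})$ act as the identity off the coordinates $\{i,i+1,i+2\}$ and, restricted there, equal the same $3\times3$ pair $A=\left(\begin{smallmatrix} a&b&0\\ c&d&0\\ 0&0&1\end{smallmatrix}\right)$, $B=\left(\begin{smallmatrix} 1&0&0\\ 0&a&b\\ 0&c&d\end{smallmatrix}\right)$ for every $i$. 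Hence all the relations $\Psi(\sigma_i)\Psi(\sigma_{i+1})\Psi(\sigma_i)=\Psi(\sigma_{i+1})\Psi(\sigma_i)\Psi(\sigma_{i+1})$ hold if and only if the single $3\times3$ identity $ABA=BAB$ holds.

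Next I would carry out the (routine) products and compare entries of $ABA$ and $BAB$. The $(1,3)$ and $(3,1)$ entries agree identically (they are $b^2$ and $c^2$), and the remaining comparisons collapse to the system
\[ a(a+bc-1)=0,\qquad abd=0,\qquad acd=0,\qquad ad(d-a)=0,\qquad d(d+bc-1)=0. \]
Then I would solve this together with the invertibility of $ad-bc$, arguing over an integral domain (the list of solution families is unaffected, and, read back with the parameters kept as indeterminates, each family consists of polynomial identities valid over the stated Laurent rings). The key reductions: if $b=0$ then $ad=\det$ is a unit, so $a,d\neq 0$, whence $acd=0$ gives $c=0$ and then $a(a-1)=0$, $d(d-1)=0$ force $a=d=1$, so $\Psi$ is trivial; symmetrically $c=0$ forces triviality. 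Thus a non-trivial $\Psi$ has $b\neq 0\neq c$, and then $abd=0$ forces $ad=0$ (which also makes $ad(d-a)=0$ automatic). The three remaining branches are exactly the stated cases: $d=0\neq a$, where $a(a+bc-1)=0$ gives $c=\tfrac{1-a}{b}$ with $a\neq 1$ (since $a=1$ would force $bc=0$); $a=0\neq d$, where symmetrically $c=\tfrac{1-d}{b}$ with $d\neq 1$; and $a=d=0$, where the remaining equations are vacuous and only $b\neq 0\neq c$ survives. In each case $ad-bc$ equals $a-1$, $d-1$, or $-bc$ respectively, hence is nonzero, so $\Psi$ is a well-defined non-trivial representation into $GL_n$ over the indicated ring (localized at the determinant where necessary). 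For the converse I would simply substitute each of the three parameter families back into the five displayed equations, check they hold identically, and check that $\Psi(\sigma_i)$ is invertible; this shows each case genuinely yields a non-trivial representation.

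The computations themselves are elementary, so the real work is bookkeeping: one must invoke the invertibility hypothesis at precisely the right moments to kill the degenerate possibilities $b=0$ and $c=0$ (otherwise one spuriously "recovers" the trivial representation and extra branches), keep careful track of the non-degeneracy conditions $a\neq 1$, $d\neq 1$, $b,c\neq 0$ that separate the three genuine cases, and be explicit about which coefficient ring — and which localization — makes $\Psi(\sigma_i)^{-1}$ lie in the ring in each case. That last point is the subtlest, since, for instance, in Case~1 the determinant $a-1$ is not a unit of $\mathbb{Z}[a^{\pm1},b^{\pm1}]$ itself and one should say whether one works over that ring or over a suitable localization.
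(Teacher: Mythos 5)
Your proposal is correct and follows essentially the same route as the paper: reduce to the single $3\times3$ braid relation, compute $ABA-BAB$ (your five equations match the paper's displayed difference matrix exactly), and run the case analysis using invertibility of $ad-bc$ to exclude $b=0$ and $c=0$. If anything, you are more careful than the paper, which asserts "at least one of $a,b,c,d$ must be zero" and dismisses the remaining branches as "similar," and which does not address the point you rightly flag that in Cases 1 and 2 the determinant $a-1$ (resp.\ $d-1$) is not a unit of the stated Laurent ring, so a localization is implicitly needed.
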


\begin{proof}
We will have a non-trivial representation if and only if all matrices representing $\sigma_i$'s are non-singular (invertible) and satisfy the cubic and commuting relations in \eqref{cubic relations} and \eqref{commuting relations}. For non-singularity we will always make sure that $ad-bc\neq0$ in all cases. It is clear the commuting relations satisfy under no conditions. Therefore, we only need to satisfy the cubic relations.\newline

Without loss of generality, it is enough to show the results in $B_3$, where for the qubic relation to hold we need,

\begin{equation*} \Psi(\sigma_1)\Psi(\sigma_2)\Psi(\sigma_1) - \Psi(\sigma_2)\Psi(\sigma_1)\Psi(\sigma_2) =0\end{equation*}

A simple calculation shows that,

\begin{equation*}\Psi(\sigma_1)\Psi(\sigma_2)\Psi(\sigma_1) - \Psi(\sigma_2)\Psi(\sigma_1)\Psi(\sigma_2) =\end{equation*}
\begin{equation*} \left(\begin{array}{rrr}
a b c + a^{2} - a & a b d & 0 \\
a c d & -a^{2} d + a d^{2} & -a b d \\
0 & -a c d & -b c d - d^{2} + d
\end{array}\right) \end{equation*}

For this matrix to be a zero matrix, it is clear that at least one of the $a, b, c, d$ must be zero. If, for example, we assume only $d=0$ then all the entries of the above matrix are zero except the first entry of the diagonal. From which we get the relation $c=\frac{1-a}{b}$. This is case 1. Case 2 and case 3 follow as easy and with a similar analysis. \\

We finish the proof by remarking that, if $b=0$ ($c=0$), it implies $c=0$ ($b=0$) and $a=d=1$, which results in the trivial representation. Also, all other possible cases will result in singular (not invertible) matrices.
\end{proof}

\begin{remark}
{\rm In Theorem \ref{maintheorem2}, Case 2 seems essentially similar to Case 1. Focusing on Case 1, we would like to point out that, as a special case, if we let $1-a=b$ we will arrive at,

\begin{align*} \Psi(\sigma_i)&:=I_{i-1} \oplus
\begin{pmatrix} 1-b&b\\1&0 \end{pmatrix} \oplus I_{n-i-1}
\\&=\begin{pmatrix}
I_{i-1}
& \vline &0 \cdots & \vline & 0 \cdots\\
\hline
0 \cdots & \vline &
\begin{matrix}
1-b & b \\
1 & 0
\end{matrix}
& \vline & 0 \cdots \\
\hline
0 \cdots & \vline & 0 \cdots & \vline & I_{n-i-1}\\
\end{pmatrix} \end{align*}
This is indeed the unreduced Burau representation. On the other hand Case 3 is exactly the simple representation we defined in Section \ref{core idea} (with only names of variables changed).}
\end{remark}

\section{\Large{Concluding remarks}} \label{Concluding remarks}
In this paper, we have used a new intuitive and graphical path analyzing approach to define a simple representation of dimension $n$ for the braid group $B_n$. Next, continuing the same approach, we have extended that representation to a more general one which also includes the unreduced Burau representation.

The current work is an updated and upgraded version of an earlier version on arXiv \cite{arasharxivev2}. Professors, Vladimir Shpilrain\cite{Shpilrain}, Valeriy Bardakov\cite{Bardakov} and Inna Sysoeva\cite{Sysoeva1}, sent me their valuable and constructive feedback and comments on that arXiv version. In fact, part of those communications encouraged the author to improve and generalize the representation in Section \ref{core idea} to the one in Section \ref{General Burau representation}. Moreover, It was pointed out to me by Valeriy Bardakov that similar results have been found in\cite{Mikhalchishina}, as a result of studying the local linear representations of the braid group $B_3$ which led to the homogeneous local representations of $B_n$ for $n\geq 2$. It was also pointed out to me by Inna Sysoeva that similar results first appeared in the paper\cite{Tong etal}, as part of the study of $(n+m-2)$ representations of $B_n$, for $m \geq 2$. The results of the present paper have been achieved completely independently and using a new method, namely, the "path analyzing method". This approach has the advantage that it applies a very simple and intuitive method capturing the fundamental interactions of the strands in a braid, i.e., over-crossings, under-crossings or no-crossings.\\

The novelty and contribution of the current work, in addition to the main results, is our new approach. In this approach we intuitively follow each strand in a braid and create a {\it history} for the strand as it interacts with other strands via over-crossings, under-crossings or no-crossings. This, directly, leads us to the desired representations, as follows. At each crossing, where the strand $i$ crosses over the strand $i+1$ for example, we assign $\mathbf{t}$ to the {\bf t}op strand and $\mathbf{b}$ to the {\bf b}ottom strand. We consider the parameter $\mathbf{t}$ as a {\it relative weight} given to strand $i$ relative to $i+1$,  hence the position $i\ i+1$ for $\mathbf{t}$ in the matrix. Similarly, the parameter $\mathbf{b}$ is a {\it relative weight} given to strand $i+1$ relative to $i$,  hence the position $i+1\ i$ for $\mathbf{b}$ in the matrix. Thus, we arrive at the very simple representation of Theorem \ref{maintheorem} defined by,

\begin{equation*} \Psi(\sigma_i)=\begin{pmatrix}
I_{i-1}
& \vline &0 \cdots & \vline & 0 \cdots\\
\hline
0 \cdots & \vline &
\begin{matrix}
0 & t \\
b & 0
\end{matrix}
& \vline & 0 \cdots \\
\hline
0 \cdots & \vline & 0 \cdots & \vline & I_{n-i-1}\\
\end{pmatrix} ,\end{equation*}
which is also {\bf Case 3} in Theorem \ref{maintheorem2}, after renaming the parameters.\\

Next, to improve this representation to the one with much smaller kernel we go one step further, as follows. Not only we assign $\mathbf{t}$ and $\mathbf{b}$ as described above, but also we assign to the top strand a new {\it weight}, $\mathbf{\omega}$, relative to itself (hence the  $i\ i$ position). It turns out that the only possible value for such a new weight is $\mathbf{\omega = 1-tb}$. This leads to the more general representation defined by,

\begin{equation*} \Psi(\sigma_i)=\begin{pmatrix}
I_{i-1}
& \vline &0 \cdots & \vline & 0 \cdots\\
\hline
0 \cdots & \vline &
\begin{matrix}
1-tb & t \\
b & 0
\end{matrix}
& \vline & 0 \cdots \\
\hline
0 \cdots & \vline & 0 \cdots & \vline & I_{n-i-1}\\
\end{pmatrix} ,\end{equation*}
which, after renaming of the parameters, is {\bf Case 1} in Theorem \ref{maintheorem2}. Similarly, if we assign to the bottom strand the weight $\mathbf{1-tb}$ ( relative to itself), we will have

\begin{equation*} \Psi(\sigma_i)=\begin{pmatrix}
I_{i-1}
& \vline &0 \cdots & \vline & 0 \cdots\\
\hline
0 \cdots & \vline &
\begin{matrix}
0 & t \\
b & 1-tb
\end{matrix}
& \vline & 0 \cdots \\
\hline
0 \cdots & \vline & 0 \cdots & \vline & I_{n-i-1}\\
\end{pmatrix} ,\end{equation*}
which is {\bf Case 2} in Theorem \ref{maintheorem2}, after renaming of the parameters. From the proof of Theorem \ref{maintheorem2}, it is also clear that, despite our wish, it is not possible to assign such a new weight to both strands at the same time. \\

However, we have not ruled out the possibility of giving a greater number of {\it relative} weights to each strand, not only relative to the one it interacts at the moment of crossing. But also relative to other strands. This avenue is worth to explore, as it could potentially lead to some new representations! This is a work in progress.

\section*{Acknowledgement}
First and foremost, the author would like to thank peer reviewers for their invaluable recommendations, which helped to improve presentation of this paper, greatly. The author would also like to thank Professors Vladimir Shpilrain, Valeriy Bardakov and Inna Sysoeva for their kind attention to this work and for their instructive communications with me. Last but not least, I would like to thank Elena Ryzhova for fruitful conversations.

\noindent\hrulefill

\renewcommand\refname{REFERENCES}
	
\end{document}